\def \z{\mathbb Z}
\def \sign{\rm sign}
\def \SL{\rm SL}
\def \PGL{\rm PGL}
\def \GL{\rm GL}
\def \PGL{\rm PGL}
\def \({\langle}
\def \){\rangle}
\def \isin{\rm lsin}
\def \il{\rm l\ell}
\begin{document}
\title{Continued Fraction approach to Gauss Reduction Theory
}
%
%
\author{Oleg Karpenkov}
%
\authorrunning{O.~Karpenkov}
%
\institute{University of Liverpool, L69 7ZL, Liverpool, UK \\
\email{karpenk@liverpool.ac.uk}}
\maketitle              
\begin{abstract}
Jordan Normal Forms serve as excellent representatives of conjugacy
classes of matrices over closed fields. Once we knows normal forms, we
can compute functions of matrices, their main invariant, etc. The situation is
much more complicated if we search for normal forms for conjugacy classes
over fields that are not closed and especially over rings.

In this paper we study $\PGL(2,\z)$-conjugacy classes of $\GL(2,\z)$ matrices.
For the ring of integers Jordan approach has various limitations and in fact it is
not effective. The normal forms of conjugacy classes of $\GL(2,\z)$ matrices
are provided by alternative theory, which is known as Gauss Reduction
Theory. We introduce a new techniques to compute reduced
forms In Gauss Reduction Theory in terms of the elements of certain
continued fractions. Current approach is based on recent progress in
geometry of numbers. The proposed technique provides an explicit
computation of periods of continued fractions for the slopes of
eigenvectors.

\keywords{Integer matrices  \and Gauss Reduction Theory \and continued fractions \and geometry of numbers.}
\end{abstract}

\section*{Introduction}

In this paper we study the structure of the conjugacy classes of
$\GL(2,\z)$. Recall that $\GL(2,\z)$ is the group of all invertible
matrices with integer coefficients. As a consequence the determinants
of such matrices are $\pm 1$.
We say that the matrices $A$
and $B$ from $\GL(2,\z)$ are {\it $\PGL(2,\z)$-conjugate} if there exists an
$\GL(2,\z)$ matrix $C$ such that $B=\pm CAC^{-1}$.
In the integer case projectivty simply means that all matrices are considered up to
the multiplication by $\pm 1$.

\vspace{2mm}

Recall that for algebraically closed fields
every matrix is conjugate to its Jordan Normal Form.
The situation with $\GL(n,\z)$ is not so simple as the set of  integer numbers does not have a field structure. 
A description of $\PGL(2,\z)$-conjugacy classes in the two-dimensional case is the subject of Gauss
Reduction Theory. The conjugacy classes are classified by periods of
certain periodic continued fractions (for additional information we refer
to~\cite{Lewis1997}, \cite{Katok2003}, and~\cite{Manin2002}). 
The first geometric invariants of $\GL(2,\z)$ matrices in the spirit of continued fractions were studied in~\cite{karpenkov-periods}. 
The questions of classification of  conjugacy classes are closely related to 
the study of homogeneous forms (see e.g. in~\cite{DeSitter})
and theory of Markov and Lagrange  spectra (see e.g. in~\cite{Cusic}).

\vspace{2mm}

Here we discuss the main elements of classical Gauss Reduction Theory based
on lattice trigonometry introduced in~\cite{itrig,itrig2} (see also in~\cite{my-book}).
Our aim is to study a natural class of reduced matrices that represent
every conjugacy class. 
It turns out that the number of reduced matrices in any $\PGL(2,\z)$-conjugacy class
of matrices is finite. 
We present a new surprising explicit formula to write 
all reduced matrices $\PGL(2,\z)$-conjugate to a given one via certain long continued fractions.
The main new method is summarised in Section~\ref{Finding reduced matrices}.
It is based on the result of  Theorem~\ref{reduction-algorithm}
which is supplemented by technical statements of Theorem~\ref{reduced-LLS}, Theorem~\ref{theorem-longcf}
and Proposition~\ref{LLS-GL}.

\vspace{2mm}

We expect that the computational complexity of the new method
is comparable to the algorithm of Chapter 7 in~\cite{my-book}.
One of the advantages of the proposed new approach is that it construct all reduced matrices 
while the classical algorithms result with a single reduced matrix.
In addition all the reduced operators of the proposed approach are explicitly described via geometric
invariants, which is potentially useful for the multidimensional case.
Recall that the studies of the conjugacy classes of $\GL(n,\z)$ for $n>2$ were motivated by V.~Arnold
(see, e.g., in~\cite{Arnold-cf}) 
who revived the notion of multidimensional continued fractions in the sense of Klein (\cite{Klein1895,Klein1896}).
The first results in higher dimensional cases were obtained in~\cite{Karpenkov-MGRT} (see also~\cite{my-book}, Chapter~21)
however the theory is far from its final form even for the case of $n=3$.
We hope that the approach of current paper will give some hints for numerous open problems in the multidimensional case.

\vspace{2mm}

This paper is organized as follows.
In Section~\ref{Background} we start with necessary notions and definitions
of geometry of numbers.
In particular  we introduce the notion of
the semigroup of reduced matrices.
We discuss three different cases of $GL(2,\z)$ matrices in general in Section~\ref{Three cases}.
In Section~\ref{Finding reduced matrices} we bring together all the stages in finding of all 
reduced matrix $\PGL(2,\z)$-conjugate to a given one.
Finally in Section~\ref{Algorithmic aspects related to computation of LLS periods}
we discuss some technical details used in the construction of reduced matrices.

\section{Background}\label{Background}

In this section we briefly discuss basic notions used in the computation of reduced matrices.
We start in Subsection~\ref{Basics of integer geometry in the plane}
with elementary notions and definitions of lattice geometry.
In Subsection~\ref{Sail and LLS sequences}
we define sails of integer angles; and introduce LLS sequences for broken lines.
Further we define LLS sequences for integer angles.
Sails and LLS sequences are important invariants related to conjugacy classes of $\GL(2,\z)$ matrices.
We continue in Subsection~\ref{LLS periods of matrices} 
with the notion of periods of LLS sequences related to matrices.
In Subsection~\ref{Matrices and continuants}
we give a continuant representation of certain class of a rather wide class of matrices (which actually 
includes all reduced matrices).
Then in Subsection~\ref{Definition of reduced matrices}
we continue with general definition of the space of reduced matrices.
We conclude this section with a general definition of difference of sequences in Subsection~\ref{Difference of sequences}.

\subsection{Basics of integer geometry in the plane}\label{Basics of integer geometry in the plane}

In this subsection we give general definitions of integer geometry.

\vspace{2mm}

We say that a point is {\it integer} if its coordinates are integers.
A segment is {\it} integer if its endpoints are integer.
An angle is called {\it integer} if its vertex is an integer point.
We also say that an integer angle is {\it rational}
if its edges contain integer points distinct to the vertex.

\vspace{2mm}

An affine transformation is said to be integer if it a one-to-one mapping of
the lattice $\z^2$ to itself. Note that the set of integer transformations is
a semidirect product of the group of translations by an integer vector and
the group $\GL(2,\z)$.

\vspace{2mm}

Two sets are {\it integer congruent}
if there exists an integer affine transformation providing a bijection
between these two sets.

\begin{definition}
The {\it integer length} of an integer segment $AB$
is the number of integer points inside its interior plus one.
Denote it by $\il(AB)$.

\vspace{1mm}

{\noindent
The {\it integer sine} of a rational angle $\angle ABC$
is defined as follows:
$$
\isin \angle ABC=
\frac{|\det(AB,BC)|}{\il(AB)\cdot\il(BC)},
$$
where $|\det(AB,BC)|$ is the absolute value of the determinant of the matrix of the pair of vectors $(AB,BC)$.
}
\end{definition}

Note that the integer lengths and integer sines are invariant under integer affine transformations.

\subsection{Sail and LLS sequences}\label{Sail and LLS sequences}

Let us now study an important invariant of angles and broken lines. It will be
employed in the proofs,
however from computational perspectives one can use the statement of Theorem~\ref{theorem-longcf} as the
explicit definition of LLS sequences for angles (without appealing to integer geometry).

\vspace{2mm}

Let $\angle ABC$ be an integer angle.
The boundary of the convex hull of all integer points in the convex closure of  $\angle ABC$ except $B$
is called the {\it sail} of $\angle ABC$.

\vspace{1mm}

Note that the sail of a rational angle is a finite broken line, while the
sail of an integer angle that is not rational is a broken line infinite to one or both sides.

\begin{definition}
Let $A_1,\ldots, A_n$ be a broken line $($here we can consider finite or infinite broken lines$)$
such that $A_i$, $A_{i+1}$ and $O$ are not in one line
for all admissible parameters of $i$,

Define
$$
\begin{array}{l}
a_{2k}=\det(OA_k,OA_{k+1}),\\
\displaystyle
a_{2k-1}=\frac{\det(A_kA_{k-1},A_kA_{k+1})}{a_{2k-2}a_k}
\end{array}
$$
for all admissible $k$.
The sequence $(a_0,\ldots, a_{2n})$ $($or an infinite one respectively$)$
is called the {\it LLS sequence} of the broken line $A_0\ldots A_n$.
\end{definition}

\begin{definition}
Consider an integer angle $\angle ABC$.
Let $\ldots A_{i-1}, A_{i}A_{i+1},\ldots$ be the sail of $\angle ABC$.  Here we consider the broken line directed from the edge 
$AB$ to the edge $BC$.
Let the LLS sequence for the broken line  $\ldots A_{i-1}, A_{i}A_{i+1},\ldots$ is
$(\ldots a_{2k-1},a_{2k},a_{2k+1},\ldots)$ $($finite or infinite$)$.
Then the sequence of absolute values
$$
(\ldots |a_{2k-1}|,|a_{2k}|,|a_{2k+1}|,\ldots)
$$
is called the LLS sequence of the angle $\angle ABC$ and denoted by $LLS(\angle ABC)$,
\end{definition}

\begin{remark}
Notice that if we consider rational angle $\angle ABC$ with a positive value of $\det(AO, BC)$
then its LLS sequence $(a_0,\ldots, a_{2n})$ consists of odd number of elements and
$$
\begin{array}{l}
a_{2k}=\il A_kA_{k+1},\\
a_{2k-1}=\isin \angle A_{k-1}A_kA_{k+1}
\end{array}
$$
for all admissible $k$. This explains the abbreviation LLS (which is Lattice Length-Sine) sequence.
\end{remark}

Let us formulate the  following important geometric property of  LLS sequences.

\begin{theorem}{\bf (\cite{itrig} 2008)}
Consider a finite broken line $A_1,\ldots, A_n$ with the LLS sequence $(a_0,\ldots, a_{2n})$.
Let also $A_0=(1,0) $ and $A_1=(1,a_0)$.
Then
$$
A_n=\big(K_{2n+1}(a_0,\ldots, a_{2n}),K_{2n}(a_1,\ldots, a_{2n})\big).
$$
\end{theorem}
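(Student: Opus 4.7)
The plan is to prove the formula by induction on the number of vertices of the broken line, using the three-term recurrence that defines continuants. Recall that $K_m(x_1,\ldots,x_m) = x_m K_{m-1}(x_1,\ldots,x_{m-1}) + K_{m-2}(x_1,\ldots,x_{m-2})$, so the pair $\bigl(K_{2n+1}(a_0,\ldots,a_{2n}),\,K_{2n}(a_1,\ldots,a_{2n})\bigr)$ itself satisfies a linear recurrence in $n$ whose coefficients are built from the $a_k$. Showing that the sequence of vertices $A_n$ satisfies the same recurrence will close the induction.

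For the base case, the stipulations $A_0=(1,0)$ and $A_1=(1,a_0)$ are already aligned with the claimed formula through the small continuants, and $\det(OA_0,OA_1)=a_0$ is consistent with the definition of the LLS sequence. For the inductive step, assume the formula holds for $A_0,\ldots,A_n$. Passing from $A_n$ to $A_{n+1}$ brings in two new LLS entries, $a_{2n-1}$ and $a_{2n}$, which by definition encode
$$
\det(OA_n,OA_{n+1}) = a_{2n}, \qquad
\det(A_nA_{n-1},\,A_nA_{n+1}) = a_{2n-1}\,a_{2n-2}\,a_{2n}.
$$
Since $A_{n-1}$, $A_n$ and $O$ are not collinear, these two linear conditions on $A_{n+1}$ form a non-degenerate system and thus pin down $A_{n+1}$ uniquely in the form $A_{n+1} = \alpha A_n + \beta A_{n-1}$, with $\alpha$ and $\beta$ explicit functions of $a_{2n-1}$ and $a_{2n}$.

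It then remains to verify that this geometric recursion matches the continuant recursion. Since each new vertex contributes two fresh LLS entries (one even-indexed length-type and one odd-indexed sine-type), each inductive step should correspond to two applications of the continuant recurrence. I expect the main obstacle to be the index bookkeeping: one must check that the odd entry $a_{2n-1}$, originally defined by normalising $\det(A_nA_{n-1},A_nA_{n+1})$ by the product $a_{2n-2}a_{2n}$, enters the recurrence with exactly the right coefficient so that the two-step geometric recursion factors into two clean continuant recursions. Once this calibration is carried out, the induction propagates the continuant formula from $A_n$ to $A_{n+1}$, completing the proof.
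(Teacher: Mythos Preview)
The paper does not actually prove this theorem: it is quoted as a result from the author's earlier work~\cite{itrig} and is immediately followed by a reference to the monograph~\cite{my-book} for further details. There is therefore no proof in the present paper to compare your proposal against.

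As to the proposal itself, the inductive strategy you outline is the standard and correct one. The two determinant conditions defining $a_{2n-1}$ and $a_{2n}$ do form a non-degenerate linear system for $A_{n+1}$ in the basis $\{A_{n-1},A_n\}$ (since $\det(OA_{n-1},OA_n)=a_{2n-2}\neq 0$), and solving it gives coefficients that, after two applications of the continuant recurrence, reproduce the claimed formula. That said, what you have written is explicitly a \emph{plan} rather than a proof: the sentences ``I expect the main obstacle to be the index bookkeeping'' and ``once this calibration is carried out'' leave the one substantive computation undone. To turn this into a proof you must actually solve the $2\times 2$ system to obtain, say, $A_{n+1}=\bigl(a_{2n-1}a_{2n}+\tfrac{a_{2n}}{a_{2n-2}}\bigr)A_n-\tfrac{a_{2n}}{a_{2n-2}}A_{n-1}$ (or the correct analogue once the paper's somewhat inconsistent indexing is straightened out), and then check coordinate by coordinate that this matches two iterations of $K_m=x_mK_{m-1}+K_{m-2}$. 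Until that verification is written out, the argument is incomplete.
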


For an additional information on continued fractions and related integer geometry related we refer an interested reader to
the monograph~\cite{my-book}.

\subsection{LLS periods of $\GL(2,\z)$ matrices}\label{LLS periods of matrices}

Further let us show how to relate matrices $M$ with finite sequences of positive integers.

\vspace{2mm}

Let $M$ be a $(2\times 2)$-matrix with two distinct real eigenvalues.
In this case $M$ has two eigenlines. The complement to these eigenlines 
is a union of four cones. 
We say that the sails of these cones are the {\it sails associated to $M$}.

\begin{definition}
We say that a sequence of positive integers  is an {\it LLS sequence of a matrix} $M$,
if this sequence is the LLS sequence of one of the sails associated to $M$. 
\end{definition}

\begin{remark}
It turns out that in the case of $\GL(2,\z)$ matrices with real irrational eigenvalues
the LLS sequences of all associated sails coincide up to a possible index shift
(see Section~7 of~\cite{my-book}).
So the LLS sequence is uniquely defined by the matrix in this case. 

\end{remark}

We conclude this subsection with the following fundamental definition. 

\begin{definition} 
Let $M$ be a $\GL(2,\z)$ matrix with real irrational eigenvalues
then its LLS sequence is periodic.  In addition the matrix $M^2$ is acting as a periodic shift
on every of the sails. Assume that $M^2$ shifts the sail by $n$ vertices.
Then any period of length $n$ is called an {\it LLS period of $M$}.
$($Here we write the elements of the period in the order from a vertex $v$ on the sail to the vertex $M^2(v)$ on the sail.$)$
\end{definition}

\begin{remark}
Note that inverse matrices to each other have reversed periods. 
\end{remark}

\subsection{Matrices and continuants}\label{Matrices and continuants}

In this section we show that for certain class of $\GL(n,\z)$ matrices
their elements have a nice representation in terms of continuants.

\vspace{1mm}

Recall first the definition of the continuant.

\begin{definition}
Let $n$ be a positive integer.
A {\it continuant} $K_n$ is a polynomial with integer coefficients defined recursively by
$$
\begin{array}{l}
K_{-1}()=0;\\
K_0()=1;\\
K_1(a_1)=a_1;\\
K_n(a_1,a_2,\ldots,a_n)=a_nK_{n-1}(a_1,a_2,\ldots,a_{n-1})+K_{n-2}(a_1,a_2,\ldots,a_{n-2}).
\end{array}
$$
\index{continuant}
\end{definition}

\begin{remark}\label{continuant-remark}
Note that
$$
[a_1;a_2:\cdots:a_n]=\frac{K_n(a_1,a_2,\ldots,a_n)}{K_{n-1}(a_2,a_2,\ldots,a_n)}.
$$
\end{remark}

Secondly we fix the following notation.

\begin{definition}\label{Def-Ma}
Let $a$ be a real number, denote by $M_a$ the following matrix:
$$
M_a=\left(
\begin{array}{cc}
0&1\\
1&a\\
\end{array}
\right).
$$
Now let $(a_1,\ldots,a_n)$ be any sequence of real numbers, we set
$$
M_{a_1,\ldots,a_n}=
\prod\limits_{k=1}^{n}
\left(
\begin{array}{cc}
0&1\\
1&a_k\\
\end{array}
\right).
$$
\end{definition}

Finally let us show that the above matrices have the following simple explicit expression for their coefficients.
We will use it later in the construction of reduced matrices.

\begin{proposition}\label{proposition-continuants-general}
Let $n\ge 0$ and let $(a_1,\ldots,a_n)$ be any sequence of real numbers.
Then we have
$$
M_{a_1,\ldots,a_n}=
\left(
\begin{array}{cc}
K_{n-2}(a_2,\ldots,a_{n-2})&K_{n-1}(a_2,\ldots,a_{n})\\
K_{n-1}(a_1,a_2,\ldots,a_{n-1})&K_{n}(a_1,a_2,\ldots,a_{n})\\
\end{array}
\right).
$$
In addition, we have
$$
\det M = (-1)^n.
$$
\end{proposition}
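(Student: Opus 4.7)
The plan is a direct induction on $n$, using the recursive definition of the continuant $K_n$ together with the obvious multiplicative property $M_{a_1,\ldots,a_n} = M_{a_1,\ldots,a_{n-1}} \cdot M_{a_n}$.

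First I would treat the small cases $n=0$ and $n=1$ by hand: the matrix $M_{a_1}$ has the shape $\left(\begin{smallmatrix}0&1\\1&a_1\end{smallmatrix}\right)$, which matches the right-hand side using the conventions $K_{-1}()=0$, $K_0()=1$, $K_1(a_1)=a_1$. (I would also quickly flag what appears to be a typographical glitch in the $(1,1)$ entry of the statement: evaluating it on $n=2,3,4$ suggests it should read $K_{n-2}(a_2,\ldots,a_{n-1})$, which is what the induction will actually produce.)

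For the inductive step, suppose the formula holds for products of length $n-1$. Then
\begin{equation*}
M_{a_1,\ldots,a_n}
=
\begin{pmatrix}
K_{n-3}(a_2,\ldots,a_{n-2})&K_{n-2}(a_2,\ldots,a_{n-1})\\
K_{n-2}(a_1,\ldots,a_{n-2})&K_{n-1}(a_1,\ldots,a_{n-1})
\end{pmatrix}
\begin{pmatrix}0&1\\1&a_n\end{pmatrix},
\end{equation*}
and carrying out the matrix multiplication produces in each entry exactly the expression
$a_n K_{m-1}(\ldots) + K_{m-2}(\ldots)$
which the continuant recursion
$K_m(a_1,\ldots,a_m) = a_m K_{m-1}(a_1,\ldots,a_{m-1}) + K_{m-2}(a_1,\ldots,a_{m-2})$
identifies with $K_m$. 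So the $(2,2)$ entry becomes $K_n(a_1,\ldots,a_n)$, the $(1,2)$ entry becomes $K_{n-1}(a_2,\ldots,a_n)$, while the left column is simply read off from the top row of the previous step.

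For the determinant claim, I would observe that each factor $M_{a_k}$ has determinant $-1$, so multiplicativity of $\det$ gives $\det M_{a_1,\ldots,a_n} = (-1)^n$ immediately. There is no real obstacle here; the only step that requires a moment of care is lining up the various shifted index ranges in the continuants when passing from step $n-1$ to step $n$, which is essentially a bookkeeping exercise made clean by the recursion.
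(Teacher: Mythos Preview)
Your proof is correct and follows essentially the same route as the paper's: induction on the length of the sequence, base cases checked by hand, and the inductive step carried out by right-multiplying by $M_{a_n}$ and invoking the continuant recursion, with the determinant handled by multiplicativity since each $\det M_{a_k}=-1$. Your observation about the $(1,1)$ entry is also apt: the correct expression is $K_{n-2}(a_2,\ldots,a_{n-1})$, and indeed the paper's own inductive computation produces exactly that.
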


\begin{example}
Consider
$$
M_{3,-3,-2,5}=
M_{3}\cdot M_{-3}\cdot M_{-2}\cdot M_{5}.
$$
Hence $M$ is represented by the following sequence:
$(3,-3,-2,5)$.
By Proposition~\ref{proposition-continuants-general}
we immediately have
$$
M_{3,-3,-2,5}=
\left(
\begin{array}{cc}
K_{2}(-3,-2)&K_{3}(-3,-2,5)\\
K_{3}(3,-3,-2)&K_{4}(3,-3,-2,5)\\
\end{array}
\right).
$$
Therefore,
$$
M_{3,-3,-2,5}=\left(
\begin{array}{cc}
7&32\\
19&87\\
\end{array}
\right).
$$
Here we actually have
$$
\frac{19}{7}=[3:-3:-2]
\quad \hbox{and} \quad
\frac{87}{19}=[3:-3:-2:5].
$$

Note also that
$$
\det  M =(-1)^4=1.
$$
\end{example}

{\noindent
{\it Proof of Proposition~\ref{proposition-continuants-general}.}}
The proof is done by induction in $n$.

\vspace{2mm}

{\noindent
{\it Base of induction.}
For $n=1$ we have
$$
M_{a_1}
=\left(
\begin{array}{cc}
0&1\\
1&a_1\\
\end{array}
\right)
=
\left(
\begin{array}{cc}
K_{-1}()&K_{0}()\\
K_{0}()&K_{1}(a_1)\\
\end{array}
\right).
$$
For $n=2$ we have
$$
M_{a_1,a_2}=M_{a_1}M_{a_2}=
\left(
\begin{array}{cc}
1&a_2\\
a_1&1+a_1a_2\\
\end{array}
\right)
=\left(
\begin{array}{cc}
K_{0}()&K_{1}(a_2)\\
K_{1}(a_1)&K_{2}(a_1,a_2)\\
\end{array}
\right).
$$
}

\vspace{2mm}

{\noindent
{\it Step of induction.}
We have
$$
\begin{array}{ll}
& M_{a_1,\ldots,a_{n+1}}=M_{a_1,\ldots,a_{n}}\cdot M_{a_{n+1}}=
\\
& \qquad
\left(
\begin{array}{cc}
K_{n-2}(a_1,\ldots,a_{n-2})&K_{n-1}(a_2,\ldots,a_{n})\\
K_{n-1}(a_1,\ldots,a_{n-1})&K_{n}(a_1,\ldots,a_{n})\\
\end{array}
\right)
\cdot
\left(
\begin{array}{cc}
0&1\\
1&a_{n+1}\\
\end{array}
\right)
=
\\
&
\qquad
\left(
\begin{array}{cc}
K_{n-1}(a_2,\ldots,a_{n})& K_{n-2}(a_2,\ldots,a_{n-2})+a_{n+1} K_{n-1}(a_2,\ldots,a_{n})\\
K_{n}(a_1,\ldots,a_{n})& K_{n-1}(a_1,\ldots,a_{n-1})+a_{n+1}K_{n}(a_1,\ldots,a_{n})\\
\end{array}
\right)
=
\\&
\qquad
\left(
\begin{array}{cc}
K_{n-1}(a_2,\ldots,a_{n})&K_{n}(a_2,\ldots,a_{n+1})\\
K_{n}(a_1,\ldots,a_{n})&K_{n+1}(a_1,\ldots,a_{n+1})\\
\end{array}
\right).
\end{array}
$$
}
The last inequality is a classical relation for the numerators and denominators of continued fractions
(see, e.g., in~\cite{Khinchin-book} or in~\cite{my-book}).
This concludes the proof for the induction step.

\vspace{2mm}

Finally, since $\det M_a=-1$ we have
$$
\det M=(-1)^n.
$$
\qed

\subsection{Definition of reduced matrices}\label{Definition of reduced matrices}

In general there are several ways to set up reduced matrices.
Here we describe one of them.
There are two main benefits for the proposed choice of reduced matrices.
Firstly, they form a semigroup with respect to the matrix multiplication.
Secondly, there is a simple description of such matrices 
in terms of continuants (see Proposition~\ref{proposition-continuants-general}).

\begin{definition}
Consider a sequence of positive integers $(a_1,\ldots.a_n)$.
Then the matrix $M_{a_1,\ldots,a_n}$ is said to be {\it reduced}.
\index{matrix!reduced}
\end{definition}

Directly from the definition of reduced matrices we have the following remarkable property.

\begin{proposition}
The set of all reduced matrices is a semigroup with respect to matrix multiplication.
\qed
\end{proposition}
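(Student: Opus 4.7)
The plan is to observe that the statement reduces to a single, almost trivial identity about concatenation of sequences, combined with the fact that positive integers are closed under concatenation. I would simply unfold Definition~\ref{Def-Ma} on both sides of a product of two reduced matrices.

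More concretely, suppose $M$ and $N$ are reduced, so there exist sequences of positive integers $(a_1,\ldots,a_n)$ and $(b_1,\ldots,b_m)$ with
$$
M = M_{a_1,\ldots,a_n} = \prod_{k=1}^{n} M_{a_k},
\qquad
N = M_{b_1,\ldots,b_m} = \prod_{k=1}^{m} M_{b_k}.
$$
By associativity of matrix multiplication, the product $MN$ equals
$$
\prod_{k=1}^{n} M_{a_k} \cdot \prod_{k=1}^{m} M_{b_k} = M_{c_1,\ldots,c_{n+m}},
$$
where the sequence $(c_1,\ldots,c_{n+m})$ is the concatenation $(a_1,\ldots,a_n,b_1,\ldots,b_m)$. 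Since each $a_i$ and each $b_j$ is a positive integer, so is every $c_k$. Hence $MN$ fits Definition of reduced matrices and is itself reduced, which is exactly the semigroup closure property. The identity element is not required here, as the statement only asserts a semigroup structure.

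There is really no obstacle: the whole argument rides on the fact that $M_{a_1,\ldots,a_n}$ was \emph{defined} as an ordered product, so concatenation of sequences corresponds to multiplication of matrices. The only thing to double-check is that the convention on the order of multiplication in Definition~\ref{Def-Ma} matches the order one picks when concatenating; the definition reads $M_{a_1,\ldots,a_n} = \prod_{k=1}^{n} M_{a_k}$ left-to-right, which is consistent with $M_{a_1,\ldots,a_n}\cdot M_{b_1,\ldots,b_m} = M_{a_1,\ldots,a_n,b_1,\ldots,b_m}$. After that check, the proof is complete.
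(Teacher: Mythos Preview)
Your proof is correct and is exactly the argument the paper has in mind: the proposition is stated with an immediate \qed, treating it as a direct consequence of the fact that $M_{a_1,\ldots,a_n}\cdot M_{b_1,\ldots,b_m}=M_{a_1,\ldots,a_n,b_1,\ldots,b_m}$ with all entries positive integers. There is nothing to add.
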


\subsection{Difference of sequences}\label{Difference of sequences}

Finally let us give the following general combinatorial definition.

\begin{definition}
Let $m>n$ be two non-negative integers and
consider two sequences of real numbers
$$
S_a=(a_1,\ldots, a_m) \quad \hbox{and} \quad S_b=(b_1,\ldots, b_n).
$$
We say that there exists a {\it difference} of $S_a$ and $S_b$ if
There exists $k\le m+1$ such that the following conditions are fulfilled
\begin{itemize}
\item $b_i = a_i$ for $1\le i<k$;
\item either $k=m+1$ or $b_k\ne a_k$;
\item $b_{k+i}=a_{k+i+m-n}$ for $0\le i\le n-k$.
\end{itemize}
In this case we denote
$$
S_a-S_b=(a_k,a_{k+1},\ldots, a_{k+n-m-1}).
$$
\end{definition}

\begin{example}
We have
$$
(1,2,3,4,5,6,7,8)-(1,2,3,6,7,8)=(4,5).
$$
\end{example}

\begin{example}
The expression
$$
(1,2,3,4,5,6,7,8)-(1,4,8).
$$
is not defined.
\end{example}

\section{Three cases of $GL(2,\z)$ matrices}\label{Three cases}
It is natural to split the matrices of  $\GL(2,\z)$ into three cases with respect to their spectra (set of eigenvalues).
We distinguish the cases of complex, rational, and real irrational spectra.
The cases of complex and rational cases are rather straightforward, they are not included to Gauss Reduction Theory. 
The case of real irrational spectra is more complicated, it is central for this paper.

\vspace{1mm}

Let us now briefly discuss these three cases in this section.

\vspace{2mm}

{\noindent
{\bf Case of complex spectra}: We start with $\GL(2,\z)$ matrices whose characteristic polynomials
have a pair of complex conjugate roots.
There are exactly three
$\PGL(2,\z)$-conjugacy classes of such matrices; they are represented by
$$
\left(
\begin{array}{cc}
1&1\\
-1&0\\
\end{array}
\right), \quad
 \left(
\begin{array}{cc}
0&1\\
-1&0\\
\end{array}
\right), \quad
\hbox{and} \quad
\left(
\begin{array}{cc}
0&1\\
-1&-1\\
\end{array}
\right).
$$
}
These classes are perfectly distinguished by traces of matrices.

\vspace{2mm}

{\noindent
{\bf Case of rational spectra}: It turns out that such matrices have eigenvalues equal to $\pm 1$, 
any of rational spectra matrices is $\PGL(2,\z)$-conjugate to exactly one of the following matrices
$$
\left(
\begin{array}{cc}
1&m\\
0&1\\
\end{array}
\right)
\quad \hbox{for} \quad m\ge 0,
\qquad
\left(
\begin{array}{cc}
1&0\\
0&-1\\
\end{array}
\right),
\qquad
\hbox{or}
\qquad
\left(
\begin{array}{cc}
1&1\\
0&-1\\
\end{array}
\right).
$$
(Note that the rational spectra case contains degenerate case of two coinciding roots.
Indeed a double root of a quadratic polynomial with integer coefficients is always a rational number.)
}

\vspace{2mm}

{\noindent
{\bf Case of real irrational spectra}: this case is the most complicated.
It is described by a so-called Gauss Reduction Theory, which 
is based on Euclidean types algorithms that provide a descend to reduced matrices
(see e.g. in Chapter 7 of~\cite{my-book}).
It is interesting to note that the number of reduced matrices integer congruent to a given one is
finite and equal to the number of elements in the minimal period
of the regular continued fraction for the tangent of the slope of any eigenvector of the matrix.
In the next section we introduce an alternative algorithm based on explicit expressions for 
reduced matrices originated in geometry of numbers.
}

\vspace{2mm}

\section{Techniques to find reduced matrices $\PGL(2,\z)$-conjugate to a given one}\label{Finding reduced matrices}

Let us outline the main stages of the reduced matrices construction. All the statements involved in it are proven
in the next section.
The construction is based on general Theorem~\ref{reduction-algorithm}
and several supplementary technical statements.

\vspace{2mm}

{\noindent
{\bf  Input data.}
We are given a $\GL(2,\z)$ matrix. Namely we have 
$$
M=\left(
\begin{array}{cc}
p&r\\
q&s\\
\end{array}
\right).
$$
}

\vspace{2mm}

{\noindent {\bf Goal of the algorithm.} List all reduced matrices $\PGL(2,\z)$-conjugate to $M$. }

\vspace{2mm}

{
\noindent
{\bf Step 1.}
Starting with any point $P_0$
set
$$
\quad P_1=M^4(P_0) \quad \hbox{and} \quad P_2=M^6(P_0).
$$
and compute $LLS(\angle P_0OP_1)$ and  $LLS(\angle P_0OP_2)$ using Theorem~\ref{theorem-longcf}.
}

\vspace{2mm}

{
\noindent
{\bf Step 2.}
By Proposition~\ref{LLS-GL} one of the periods of LLS sequence for $M$ is a half of
$$
LLS(\angle P_0OP_2)-LLS(\angle P_0OP_1).
$$
We take the first half of this sequence, so let the period be 
$$
(a_1,\ldots,a_n)
$$
and let the lengths of minimal possible periods  be $m$.
}

\vspace{2mm}

{
\noindent
{\bf Step 3.}
Now we can write down the reduced matrices in accordance with Theorem~\ref{reduced-LLS}
and Proposition~\ref{proposition-continuants-general}.
}
\vspace{2mm}

{\noindent
{\bf Output.}
All the reduced matrices $\PGL(2,\z)$-conjugate to $M$ will be of the form
$$
\left(
\begin{array}{cc}
K_{n-2}(a_{k+2},\ldots,a_{k+n-2})&K_{n-1}(a_{k+2},\ldots,a_{k+n})\\
K_{n-1}(a_{k+1},a_{k+2},\ldots,a_{k+n-1})&K_{k+n}(a_{k+1},a_{k+2},\ldots,a_{k+n})\\
\end{array}
\right),
$$
here $k=0,\ldots,m-1$.
}

\begin{example}
{\bf Input:} Let us find all reduced matrices for the matrix
$$
M=\left(
\begin{array}{cc}
7&-30\\
-10&43\\
\end{array}
\right).
$$ 
{
\noindent
{\bf Step 1.}
Starting with any point $P_0=(1,1)$
set
$$
\begin{array}{l}
 P_1=M^4(P_0)=(-2875199,4119201) \quad \hbox{and} \\
 P_2=M^6(P_0)=(-7182245951, 10289762449).
 \end{array}
$$
Let us first compute $LLS(\angle P_0OP_1)$.
First of all note that
$$
\varepsilon=-\sign\frac{1}{1}=-1
\quad \hbox{and}
\quad
\delta=\frac{-2875199}{4119201}=-1.
$$
In addition
$$
\det(OP_1OP_2)\cdot (-1)>0.
$$
Therefore, we consider the following odd regular continued fractions
$$
\begin{array}{l}
\frac{1}{1}=[1];\\
\left|\frac{-2875199}{4119201}\right|=\frac{2875199}{4119201}=
[0; 1:2:3:4:1:2:3:4:1:2:3:4:1:2:3:3].
\end{array}
$$
No we combine these two continued fractions in accordance with Theorem~\ref{theorem-longcf}:
$$
\begin{array}{l}
[-1; 0: 0: -1: -2: -3: -4: -1: -2: -3: -4: -1: -2:
 -3: -4: \\
 \qquad \qquad \qquad \qquad \qquad
 \displaystyle -1: -2: -3: -3]=\frac{-6994400}{4119201}.\\
 \end{array}
$$
We have
$$
\begin{array}{l}
 \displaystyle
\left|\frac{-6994400}{4119201}\right|=\frac{6994400}{4119201}
\\
 \qquad \qquad \quad
 =[1; 1: 2: 3: 4: 1: 2: 3: 4: 1: 2: 3: 4: 1: 2: 3: 3].
\end{array}
$$
Therefore,
$$
LLS(\angle P_0OP_1)=(\underline{1, 1, 2, 3, 4, 1, 2, 3, 4, 1, 2, 3, 4, 1, 2, 3}, \overline{3})
$$
Similarly we get
$$
LLS(\angle P_0OP_2)=(\underline{1, 1, 2, 3, 4, 1, 2, 3, 4, 1, 2, 3, 4, 1, 2, 3}, \framebox{4, 1, 2, 3, 4, 1, 2, 3}, \overline{3}).
$$
}
(Here we show the difference of the sequences in the box.)

\vspace{2mm}

{
\noindent
{\bf Step 2.}
By Proposition~\ref{LLS-GL} one of the periods of the LLS sequence for $M$ is 
a half of the sequence
$$
LLS(\angle P_0OP_2)-LLS(\angle P_0OP_1)=(4,1,2,3,4,1,2,3),
$$
which is 
$$(4,1,2,3).$$
Note that the minimal possible period is pf length $4$.
}

\vspace{2mm}

{
\noindent
{\bf Step 3.}
We can write down the reduced matrices in accordance with Theorem~\ref{reduced-LLS}
and Proposition~\ref{proposition-continuants-general}
for all distinct periods of length 4, i.e., for
$$
(4,1,2,3), \quad
(1,2,3,4), \quad
(2,3,4,1), \quad \hbox{and} \quad
(3,4,1,2).
$$
}

\vspace{2mm}

{\noindent
{\bf Output.}
Finally applying Proposition~\ref{proposition-continuants-general} to these four sequences we have
the list of all reduced matrices $\PGL(2,\z)$-conjugate to $M$:
$$
\left(
\begin{array}{cc}
K_{2}(1,2)&K_{3}(1,2,3)\\
K_{3}(4,1,2)&K_{4}(4,1,2,3)\\
\end{array}
\right)=
\left(
\begin{array}{cc}
3&10\\
14&47\\
\end{array}
\right),
\quad
\left(
\begin{array}{cc}
7&30\\
10&43\\
\end{array}
\right),
\quad
\left(
\begin{array}{cc}
13&16\\
30&37\\
\end{array}
\right),
\quad
\left(
\begin{array}{cc}
5&14\\
16&45\\
\end{array}
\right).
$$
}
(We show continuants only for the first matrix and omit them for the others.)
\end{example}

\section{Technical aspects related to computation of reduced matrices}\label{Algorithmic aspects related to computation of LLS periods}

In this section we prove some technical statements involved in justification of the above algorithm. 
We start in Subsection~\ref{Continued fraction enumeration of reduced matrices}
with writing periods of LLS sequences for reduced matrices.
In Subsection~\ref{Matrices PGL-conjugate to a given one}
we explain how to list all reduced matrices $\PGL(2,\z)$-conjugate to the given one (the reduced matrices are given in terms of
LLS periods of original matrices).
Then we show in general  how to compute LLS sequences of angles in Subsection~\ref{Computation of LLS sequences for rational angles}.
Finally in Subsection~\ref{Periods of the LLS sequences corresponding to matrices}
we give the algorithm for computation of LLS sequence periods.

\subsection{Continued fraction enumeration of reduced matrices}\label{Continued fraction enumeration of reduced matrices}

Let us find a period of the LLS sequence for matrices $M_{a_1,a_2,\ldots ,a_{n}}$.

\begin{theorem}\label{reduced-LLS}
Let  $n$, $a_1,\ldots, a_n$  be positive integers.
Then one of the periods of the LLS sequence for $M_{a_1,a_2,\ldots ,a_{n}}$ is
$$
(a_1,a_2,\ldots ,a_{n}).
$$
\end{theorem}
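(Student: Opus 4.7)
The plan is to construct an explicit sail of $M := M_{a_1,\ldots,a_n}$ and read its LLS period directly from the defining sequence $(a_1,\ldots,a_n)$.

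Periodically extend $(a_k)$ to $k\in\z$ by setting $a_{k+n}=a_k$, and define integer vectors $V_k \in \z^2$ by $V_0=(0,1)$, $V_1=(1,a_1)$, and the recurrence $V_{k+1} = a_{k+1}V_k + V_{k-1}$. For $k\ge 0$, Proposition~\ref{proposition-continuants-general} gives $V_k = M_{a_1,\ldots,a_k}(0,1)$. The multiplicativity $M_{a_1,\ldots,a_{k+n}} = M \cdot M_{a_1,\ldots,a_k}$ (which uses $n$-periodicity of the $a_i$'s) implies $MV_k = V_{k+n}$, and so $M^2V_k = V_{k+2n}$.

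A short induction from $\det(V_0,V_1)=-1$ and the recurrence yields $\det(V_k,V_{k+1})=(-1)^{k+1}$, so consecutive pairs $(V_k,V_{k+1})$ are $\z$-bases of $\z^2$. Substituting the recurrence gives $|\det(V_{k-1},V_{k+1})| = a_{k+1}$, and a further expansion yields $|\det(V_{2k-1}-V_{2k+1},\,V_{2k+3}-V_{2k+1})| = a_{2k+1}a_{2k+2}a_{2k+3}$. An asymptotic analysis of the $V_k$'s (using positivity of the $a_i$'s) identifies a single open wedge between the eigenlines of $M$ in which all the odd-indexed $V_{2k+1}$ sit; the constant sign of the turning determinants then shows that the broken line $\ldots,V_{-1},V_1,V_3,\ldots$ is convex, and the unimodularity $|\det(V_k,V_{k+1})|=1$ rules out any lattice point of that wedge lying strictly between the broken line and the origin. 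Hence this broken line is precisely the sail of the chosen wedge.

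Reading the LLS data off the determinants: since $V_{2k+3}-V_{2k+1}=a_{2k+3}V_{2k+2}$ with $V_{2k+2}$ primitive, the edge $V_{2k+1}V_{2k+3}$ has integer length $a_{2k+3}$; dividing the triple determinant $a_{2k+1}a_{2k+2}a_{2k+3}$ by the two adjacent edge lengths yields integer sine $a_{2k+2}$ at the vertex $V_{2k+1}$. Starting the interleaved LLS sequence at the vertex $V_{-1}$ therefore produces $(a_1,a_2,a_3,a_4,\ldots)$, which has $(a_1,\ldots,a_n)$ as a period of length $n$. Because $M^2$ sends $V_{-1}$ to $V_{2n-1}$, advancing the sail by exactly $n$ vertices, this is an LLS period of $M$ in the sense of Section~\ref{LLS periods of matrices}. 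The main obstacle is the geometric step above: showing that the $V_{2k+1}$ all lie in a common open wedge and that the broken line through them is the entire sail rather than just part of it. Both points reduce to the unimodularity combined with positivity of the $a_i$'s, but will require a small case split on the parity of $n$ since $\det M = (-1)^n$ decides the sign of the second eigenvalue of $M$.
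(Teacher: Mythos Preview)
Your argument is correct in outline and the self-acknowledged geometric gap is indeed fillable with the unimodularity you establish; but your route differs from the paper's. The paper works only with the coarse orbit $(x_k,y_k)=M^k(1,0)$, observes that $y_k/x_k$ is the $k$-fold repetition $[(a_1;\cdots:a_n)^k]$, and then cites the classical geometric fact (Theorem~3.1 of~\cite{my-book}) that convergents are sail vertices to conclude that $(1,\alpha)$ with $\alpha=[(a_1;\cdots:a_n)]$ is an eigendirection and that the sail of $M$ eventually coincides with the sail of $\alpha$, whose LLS period is $(a_1,\ldots,a_n)$ by construction. In contrast, you build the \emph{full} convergent sequence $V_k$ (not just its $n$-step subsequence), prove from scratch via $|\det(V_k,V_{k+1})|=1$ and convexity that the odd-indexed $V_{2k+1}$ are the sail, and then read off lengths and sines by direct determinant computation. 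Your approach is more self-contained and makes the identification ``period of LLS $=$ defining sequence'' completely explicit, at the cost of the extra work you flag (locating the $V_{2k+1}$ in one wedge, handling the parity of $n$); the paper's approach is shorter but leans on an external reference for the key geometric step.
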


\begin{proof}
Consider the sequence of integer points
$$
(x_k,y_k)=M_{a_1,a_2,\ldots ,a_{n}}^k(1,0), \quad \hbox{for $k=1,2,3,\ldots$}
$$
By Item~{\it $($i$)$} for every $k$ the coordinates $x_k$ and $y_k$ are relatively prime and
$$
\frac{y_k}{x_k}=[(a_1;a_2:\cdots:a_n)^k].
$$
Therefore, all the points $(x_k,y_k)$
 are vertices of  the sail the periodic continued fraction
$$
\alpha=[(a_1;a_2:\cdots:a_n)].
$$
(This is a classical statement of geometry of numbers (Theorem~3.1 of~\cite{my-book}).)
This immediately implies that the direction of the vector $(1,\alpha)$ is
the limiting direction for the sequence of directions for the vectors $(x_k,y_k)$,
and in particular that
$$
\lim\limits_{k\to \infty}\frac{y_k}{x_k}=\alpha.
$$
Hence $(1,\alpha)$ is one of the eigenvectors corresponding to
the maximal eigenvalue (and thus  the eigenvalues are both real and distinct).

By construction the LLS sequence for $\alpha$ is periodic with period
$$
(a_1,a_2,\ldots,a_n).
$$

Finally 
the sail for $\alpha$ from some element coincides with the sail for $M$.
Since the sail for $M$ is periodic, the period is the same as for $\alpha$,
i.e.,
$$
(a_1,a_2,\ldots,a_n).
$$
This concludes the proof.
\qed
\end{proof}

\subsection{Matrices $\PGL(2,\z)$-conjugate to a given one}\label{Matrices PGL-conjugate to a given one}

The following theorem produces the list of all reduced matrices $\PGL(2,\z)$-conjugate to a given one.

\begin{theorem}\label{reduction-algorithm}
Let $M$ be a $\GL(2,\z)$ matrix and let
$$
(a_1,\ldots,a_n)
$$
be a period of LLS sequence corresponding to $M$.
Finally let $m$ be the minimal lengths of the period of the LLS sequence.
Then the list of all reduced matrices $\PGL(2,\z)$-conjugate to $M$
consists of $m$ matrices of the form
$$
M_{a_{1+k},\ldots,a_{n+k}} \quad \hbox{for $k=1,\ldots, m$}.
$$
\end{theorem}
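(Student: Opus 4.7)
The strategy is to establish three things in turn: (a) each matrix on the proposed list is reduced and $\PGL(2,\z)$-conjugate to $M$; (b) the $m$ matrices on the list are pairwise distinct; (c) every reduced matrix $\PGL(2,\z)$-conjugate to $M$ already appears on the list.

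For part (a), each $M_{a_{1+k},\ldots,a_{n+k}}$ is reduced by the very definition of reduced matrices, since all $a_i$ are positive integers. Applying Theorem~\ref{reduced-LLS} to the cyclically shifted sequence $(a_{1+k},\ldots,a_{n+k})$ shows that its LLS sequence is periodic with the same doubly-infinite periodic pattern as that of $M$; the two differ only by a re-indexing of vertices along the sail. The classical $\PGL(2,\z)$-classification of matrices with real irrational spectrum via their LLS sequence up to index shift (the central result of Gauss Reduction Theory, see Chapter~7 of~\cite{my-book}) then implies $M_{a_{1+k},\ldots,a_{n+k}}$ is $\PGL(2,\z)$-conjugate to $M$.

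For part (b), suppose $M_{a_{1+k},\ldots,a_{n+k}} = M_{a_{1+k'},\ldots,a_{n+k'}}$ for some $0\le k<k'<m$. By Proposition~\ref{proposition-continuants-general} the entries of these matrices are continuants of the underlying sequences, and the continuant identity together with Remark~\ref{continuant-remark} lets one recover the sequence from the matrix: reading ratios of entries produces the regular continued fraction $[a_{1+k};a_{2+k}:\cdots:a_{n+k}]$, and each partial quotient can then be extracted inductively. Consequently $a_{i+k}=a_{i+k'}$ for every $i$, which means the LLS sequence has period $k'-k<m$, contradicting the minimality of~$m$.

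For part (c), which I expect to be the main obstacle, let $N$ be any reduced matrix $\PGL(2,\z)$-conjugate to $M$. Writing $N=M_{b_1,\ldots,b_l}$, Theorem~\ref{reduced-LLS} provides $(b_1,\ldots,b_l)$ as a period of the LLS sequence of $N$. Since conjugation preserves the sail structure and the shift action of a matrix on its sail, the length $l$ must equal the shift length $n$ of $M^2$, and the doubly-infinite LLS sequences of $M$ and $N$ coincide up to a cyclic re-indexing. Thus $(b_1,\ldots,b_l)=(a_{1+k},\ldots,a_{n+k})$ for some $k$, and by reducing $k$ modulo $m$ we find $N$ on the list. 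The delicate point here is to argue that the period length $n$ (and not just the minimal length $m$) is itself a conjugacy invariant reflecting the intrinsic shift of $N^2$ along the sail; this is what forces $l=n$ rather than merely $l$ being a multiple of $m$, and it is where the geometric interpretation of reduced matrices acting on their own sail is indispensable.
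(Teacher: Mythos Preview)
Your proposal is correct and follows essentially the same route as the paper: both arguments rest on the fact that the LLS sequence (up to cyclic shift) is a complete $\PGL(2,\z)$-invariant, on Theorem~\ref{reduced-LLS} identifying the LLS period of a reduced matrix with its defining sequence, and on the shift length $n$ of $M^2$ along the sail as the invariant that pins down which power (equivalently, which period length) occurs.

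The main organisational difference is that you decompose the argument cleanly into existence/distinctness/exhaustiveness, whereas the paper compresses the same ideas into a few lines: it first invokes the equivalence ``same LLS sequence $\Leftrightarrow$ integer-congruent eigenline pairs'' to conclude that any reduced candidate must commute with one of the $\pm M_{a_{1+k},\ldots,a_{n+k}}$ and hence be a power of it, and then uses the $n$-vertex shift of $M^2$ to single out the correct power. Your part~(b), the explicit distinctness argument via recovering the defining sequence from the continuant entries, is an addition the paper does not spell out; it is a nice complement, and your observation that equal matrices force a sub-period of length $<m$ is the right way to close it.
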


\begin{proof}
We know that two operators have the same LLS sequences if and only if their unions of eigenlines are integer
congruent to each other.
Hence $M$ could be congruent only to reduced matrices commuting with
$$
\pm M_{a_{1+k},\ldots,a_{n+k}} \quad \hbox{for $k=1,\ldots, m$}.
$$
(These are the only matrices that have such LLS sequences.) Such matrices are some powers of these matrices.

Finally the shift of the LLS sequence of $M^2$ by $n$ vertices uniquely determines the reduced matrices.
\qed
\end{proof}

\subsection{Computation of LLS sequences for rational angles}\label{Computation of LLS sequences for rational angles}

In this subsection we formulate a theorem that provides
an explicit techniques to write the LLS sequence directly from the values of the elements of
a given matrix. We start with the following remark.

\begin{remark}\label{removing-0a_1-remark}
Recall one technical statement for angles represented by slopes with tangents less than 1:
the angles represented by the continued fractions
$$
[0;a_1:a_2:\cdots:a_{2n}]
\quad \hbox{and} \quad
[a_2;\cdots:a_{2n}]
$$
are integer congruent. In particular, they have the same LLS sequences.
\end{remark}

Now we can formulate the following result. 

\begin{theorem}\label{theorem-longcf}
Consider two linearly independent integer vectors
$$
A=(p,q) \quad \hbox{and} \quad  B=(r,s).
$$
We assume that none of them are proportional either to $(1,0)$ or to $(0,1).$
Let two sequences of integers
$$
(a_0,a_1,\ldots ,a_{2m}) \quad \hbox{and} \quad (b_0,a_1,\ldots ,b_{2n})
$$
be defined as the sequences of elements of the odd regular continued fractions of

\begin{itemize}
\item  integers $|q/p|$ and $|s/r|$ in case of $\det (OA,OB)\cdot \sign\frac{p}{q}<0$;

\item  integer $|p/q|$ and $|r/s|$ in case of $\det (OA,OB)\cdot \sign\frac{p}{q}>0$.
\end{itemize}
Further we set
$$
\varepsilon=-\sign\frac{p}{q}
\quad \hbox{and} \quad
\delta=\sign\frac{r}{s}.
$$
Denote
$$
\alpha=[\varepsilon a_{2m}: \varepsilon a_{2m-1}:\cdots:\varepsilon a_1:\varepsilon a_{0}:0:\delta b_0:
\delta b_1:\cdots: \delta b_{2n}].
$$
Let
$$
|\alpha|=[c_0;c_1: \cdots: c_{2k}]
$$
be the regular odd continued fraction for $|\alpha|$.
Set
\begin{itemize}
\item $S=(c_0,c_1,\ldots, c_{2k})$ in case if $c_0\ne 0$;
\item $S=(c_2,\ldots, c_{2k})$ in case if $c_0= 0$.
\end{itemize}
Then $S$ is the LLS sequence for the  angle $\angle AOB$.
\end{theorem}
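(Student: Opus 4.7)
The plan is to reduce the statement to the classical half-angle formula: when one ray of the angle lies on a coordinate axis, the LLS sequence of the angle is directly the odd regular continued fraction of the slope of the other ray. The theorem preceding Subsection~2.3 (the ``$A_n = \big(K_{2n+1}(a_0,\ldots,a_{2n}),K_{2n}(a_1,\ldots,a_{2n})\big)$'' statement) already encodes this fact. The job is then to glue two half-angles into one.

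First I would normalise using integer affine transformations. Applying the reflections generated by $\pm 1$ entries on the diagonal, one can move $A$ into the half-plane where $\sign(p/q)$ becomes positive, which explains the sign $\varepsilon=-\sign(p/q)$ chosen to invert this reflection when expressing the continued fraction. The two cases of $\det(OA,OB)\cdot \sign(p/q)$ correspond to whether $OA$ is ``steeper'' or ``shallower'' than the diagonal; the case choice between $|q/p|$ and $|p/q|$ ensures that the continued fraction expansion is taken in the direction in which the sail vertices accumulate correctly. The factor $\delta$ plays the same role for $B$.

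Next, I would split the sail of $\angle AOB$ at its ``apex'' vertex $V$, namely the unique sail vertex closest to the line $OA+OB$ (equivalently, a vertex on the ``ridge'' of the sail). This produces two half-sails $A\dots V$ and $V\dots B$, each integer congruent to a canonical half-angle situation. By the classical half-angle formula, $A\dots V$ has LLS sequence $(a_0,\ldots,a_{2m})$ and $V\dots B$ has LLS sequence $(b_0,\ldots,b_{2n})$. The LLS sequence of $\angle AOB$ is then the concatenation of these two sequences with an appropriate ``junction'' element at $V$; the key calculation is that inserting a $0$ between the reversed first continued fraction and the second one (with the sign twists $\varepsilon,\delta$) produces a single continued fraction that represents exactly this concatenation. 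This is where Remark~\ref{removing-0a_1-remark} comes in: after simplifying the regular form $|\alpha|=[c_0;c_1:\cdots:c_{2k}]$, a leading $c_0=0$ signals that the junction ended up at position $0$ (so must be dropped), while $c_0\ne 0$ means the two half-sequences merged non-trivially and nothing needs to be removed.

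The main obstacle is the sign and orientation bookkeeping: one must verify that the prescribed $\varepsilon,\delta$ and the case split for $|q/p|$ vs.\ $|p/q|$ correctly match each geometric configuration of the pair $(A,B)$, so that the ``collapse'' of the continued fraction $\alpha$ into a regular one with non-negative partial quotients coincides with the geometric concatenation of half-sails. The arithmetic identity needed for the collapse is standard (it is a repeated application of the identity $[\ldots:x:0:y:\ldots]=[\ldots:x+y:\ldots]$ for continued fractions with a $0$ entry), but one needs to check that after distributing $\varepsilon$ and $\delta$ no negative partial quotients survive except at a single junction point, and that the absolute values thereby produced are exactly the LLS entries. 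Once this case analysis is handled, combining it with the half-angle formula yields the theorem.
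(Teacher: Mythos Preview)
Your decomposition is at the wrong point, and this makes the argument break down. The sequences $(a_0,\ldots,a_{2m})$ and $(b_0,\ldots,b_{2n})$ in the statement are the odd regular continued fractions of $|q/p|$ (or $|p/q|$) and $|s/r|$ (or $|r/s|$); geometrically these are the LLS sequences of the angles $\angle AOE$ and $\angle EOB$ where $E=(1,0)$ lies on the coordinate axis. They have nothing to do with an internal ``apex'' vertex $V$ of the sail of $\angle AOB$. So your claim that the half-sail $A\ldots V$ has LLS sequence $(a_0,\ldots,a_{2m})$ is simply false in general: the length of that half-sail depends on where $V$ sits inside $\angle AOB$, whereas $(a_0,\ldots,a_{2m})$ is completely determined by $A$ alone.

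The paper instead splits at $E=(1,0)$ and builds an auxiliary broken line $L$ that is \emph{not} the sail of $\angle AOB$: it is the concatenation of the sail of $\angle AOE$, possibly a degenerate vertical edge at $E$, and the sail of $\angle EOB$. The point is that the broken-line theorem of Subsection~2.2 applies to any broken line, not just sails; it says the endpoint of $L$ is determined by continuants of its LLS sequence, hence the rational $\alpha$ such that $\angle AOB\cong\angle EOC$ with $C=(1,\alpha)$ is read off from that sequence. The $0$ in the middle of the long continued fraction is the integer sine at the junction $E$ (two vertical or asymptotically vertical edges meeting), and the signs $\varepsilon,\delta$ record the clockwise/counterclockwise orientation of the two constituent sails relative to $E$; the case split on $\det(OA,OB)\cdot\sign(p/q)$ is an orientation condition, not a steepness-versus-diagonal condition as you suggest. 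Once one has $\alpha$, the LLS sequence of $\angle AOB$ is the odd regular continued fraction of $|\alpha|$, with Remark~\ref{removing-0a_1-remark} handling the $c_0=0$ case. The actual work is the sign bookkeeping across the various quadrants of $A$ and $B$, which the paper does in one representative case and leaves the rest as analogous; your proposal would still owe that work even after fixing the splitting point.
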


\begin{remark}
In fact it is possible to simplify the computation of the continued fraction for
$$
\alpha=[\varepsilon a_{2m}: \varepsilon a_{2m-1}:\cdots:\varepsilon a_1:\varepsilon a_{0}:0:\delta b_0:
\delta b_1:\cdots: \delta b_{2n}],
$$
namely we do not need to find $(\delta b_1:\cdots: \delta b_{2n})$.
\\
In the case of  $\det (OA,OB)\cdot \sign\frac{p}{q}<0$ we can simply take
$$
\alpha=[\varepsilon a_{2m}: \varepsilon a_{2m-1}:\cdots:\varepsilon a_1:\varepsilon a_{0}:0: q/p];
$$
in the case of  $\det (OA,OB)\cdot \sign\frac{p}{q}>0$ we have
$$
\alpha=[\varepsilon a_{2m}: \varepsilon a_{2m-1}:\cdots:\varepsilon a_1:\varepsilon a_{0}:0: p/q].
$$
\end{remark}


\begin{example}
Consider the angle $\alpha=\angle AOB$ with
$$
A=(8,2) \quad \hbox{and} \quad B=(6,21).
$$
Let us compute its LLS sequence using the techniques suggested by Theorem~\ref{theorem-longcf}.

Note first that
$$
\frac{8}{2}\cdot \det \left(
\begin{array}{cc}
8&6\\
2&21\\
\end{array}
\right)=4\cdot 156=624 >0,
$$
hence we consider $8/2$ and $6/21$ respectively.
We have
$$
\varepsilon=-\sign\frac{8}{2}=-1
\quad \hbox{and} \quad
\delta=\sign\frac{6}{21}=1.
$$
Further we have
$$
\frac{8}{2}=4=[4] ,\quad \hbox{and} \quad \frac{6}{21}=\frac{2}{7}=[0;3:2].
$$
So the expression for the long continued fraction is as follows:
$$
[-4;0:0:3:2]=-\frac{26}{7}.
$$
Let us now write the odd continued fraction for $|-26/7|$:
$$
\left|-\frac{26}{7}\right|=[3;1:2:1:1].
$$
Since the first element of the continued fraction is not equal to zero ($3\ne 0$)
the LLS sequence for $\alpha$ is
$$
(3,1,2,1,1).
$$
\end{example}

\noindent{{\it Proof of Theorem~\ref{theorem-longcf}.}
First we set $E=(1,0)$.
Consider the broken line that is a concatenation of the sail of the angle $\angle AOE$ (in case if the last edge of this sail is
not vertical we add the infinitesimal edge $EE$ of zero integer length with vertical direction and $0$ integer length) and
the sail for the angle $\angle EOB$ (again we add another infinitesimal edge $EE$ in case
if the first edge of the sail of the angle is not vertical).

Note that this broken line $L$ have the following properties:

--- it starts at the ray $OA$ and ends at the ray $OB$;

--- the direction of the first edge is towards the interior of the angle $\angle AOB$.

Then the angle is integer congruent to the angle $\angle EOC$ with
$C=(1,\alpha)$ where
$|\alpha|$ is defined by the LLS sequence of the above broken line as
$$
\alpha=[ \varepsilon a_{2m}: \varepsilon a_{2m-1}:\cdots:\varepsilon a_1:\varepsilon a_{0}:0:\delta b_0:
\delta b_1:\cdots: \delta b_{2n}].
$$
The proof for this formula is given by the study numerous straightforward cases of various signs for $p,q,r,s$ and $\det (OA,OB)$.

\vspace{2mm}

Let us study the case $p,q,r,s>0, \det(OA,OB)<0$.

In this case, the first part of the broken line $L$ will be the sail of $\angle AOE$ passed clockwise.
Hence the elements of the LLS sequence will be reversed and negative to the values of the LLS sequence for $\angle AOE$.
Note that in case if $q/p<1$ we end up with an infinitesimal (zero integer length) vertical vector
which additionally brings two elements: 
the element $\lfloor p/q \rfloor$ for the angle with the vertical line passing through $E$,
and the element $0$ indicating that we stay at $E$,.
Then we switch to the second sail. Both sails are starting vertically
(or asymptotically vertical in case if $a_1$ or $b_1$ are zeroes), hence the angle between the edges
corresponding to $a_0$ and $b_0$ is zero. So we add a zero element to the LLS sequence for $L$ here.
Finally we continue back following the sail of the angle $\angle  EOB$, which is described by the continued fraction
$$
[b_0:b_1:\cdots: b_{2n}]
$$
(here again we have $b_0=0$ and $b_1=\lfloor s/r\rfloor $ for the case of $r/s<1$).
Hence the LLS sequence of the broken line $L$ is
$$
(- a_{2m}, -a_{2m-1},\ldots,-a_1, -a_{0},0, b_0,
b_1,\ldots, b_{2n}).
$$
Finally we get
$$
\alpha=
[- a_{2m}: -a_{2m-1}:\cdots:-a_1: -a_{0}:0: b_0:
b_1:\cdots: b_{2n}].
$$

The cases for the rest choices of signs  for $p,q,r,s$ and $\det (OA,OB)$ are considered similarly,
so we omit them here.

\vspace{2mm}

Let now
$$
|\alpha|=[c_0;c_1: \cdots: c_{2k}].
$$
Therefore  (c.f. Remark~\ref{removing-0a_1-remark},) the LLS sequence for $\angle EOC$ is either 
$$
( c_0,c_1,c_2, \ldots, c_{2k}) \quad \hbox{ in case if $c_0\ne 0$;}
$$
or
$$
(c_2, \ldots, c_{2k}) \quad \hbox{ in case if $c_0\ne 0$}
.
$$
\qed

\subsection{Periods of the LLS sequences corresponding to matrices}\label{Periods of the LLS sequences corresponding to matrices}

In this subsection we show how to extract periods of the LLS sequence for a given matrix.

\begin{proposition}\label{LLS-GL}
Let a $\GL(2,\z)$ matrix $M$ has distinct irrational eigenvalues $($not necessarily positive$)$.
Let also $P_0$ be any non-zero integer point.
Denote
$$
P_1=M^4(P_0), \quad \hbox{and} \quad P_2=M^6(P_0).
$$
Then there exists a difference
$$
LLS(\angle P_0OP_2) - LLS(\angle P_0OP_1),
$$
which is a period of the LLS sequence for $M$ repeated twice.
\end{proposition}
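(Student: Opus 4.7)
The plan is to analyze the sails of $\angle P_0OP_1$ and $\angle P_0OP_2$ relative to the sail of the cone $\mathcal{C}$ containing $P_0$, exploiting the $M^2$-equivariance of that sail. First I would argue that the three points $P_0, P_1, P_2$ all lie in a single open cone cut out by the eigenlines of $M$. Since $M$ has two distinct real irrational eigenvalues, the eigenlines partition the plane into four open cones; and $M^2\in\SL(2,\z)$ has positive eigenvalues (the squares of those of $M$), so $M^2$ preserves each cone. Because $P_1=(M^2)^2P_0$ and $P_2=(M^2)^3P_0$, all three points lie in the same cone $\mathcal{C}$.

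Next I would decompose the sail of $\angle P_0OP_k$ for $k=1,2$ into three parts: an initial portion attached to the ray $OP_0$, a middle portion tracking a finite stretch of the sail of $\mathcal{C}$, and a terminal portion attached to the ray $OP_k$. The initial portions for $k=1$ and $k=2$ coincide, because the convex hull of integer points near $OP_0$ is determined by data on the $P_0$-side and by the sail of $\mathcal{C}$ there, which is the same in both cases. The terminal portions near $P_1$ and $P_2$ are integer congruent via $M^2$ (since $P_2=M^2P_1$ and $M^2$ is an integer automorphism preserving $\mathcal{C}$), so they contribute the same LLS subsequence at the very end. Thus the two LLS sequences agree on a common prefix and a common suffix, and their discrepancy lies entirely in the middle portion.

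The core combinatorial claim is that the middle of $\angle P_0OP_2$ is longer than that of $\angle P_0OP_1$ by exactly two LLS periods of $M$. To establish this I would use Theorem~\ref{theorem-longcf}: the regular continued fraction of the slope of $P_k=M^{2k}P_0$ is a truncation of the periodic continued fraction of the expanding eigendirection of $M$, so passing from $P_1$ to $P_2$ appends one further period to this truncation, while the continued fraction of the slope of $P_0$ is fixed. Substituting into the long continued fraction of Theorem~\ref{theorem-longcf} and reducing to regular odd form, a careful accounting shows that one additional period in the $b$-part translates into exactly two additional periods of the LLS sequence of the angle, matching the eight-element extension seen in the worked example where the period is $(4,1,2,3)$. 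Once this count is verified, the common prefix/suffix structure from the previous paragraph makes the combinatorial difference of Subsection~\ref{Difference of sequences} well-defined, and it equals two consecutive copies of an LLS period of $M$.

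The hard part will be the factor of two in the previous paragraph, which is a combinatorial identity about periodic continued fractions: reducing $[\varepsilon a_{2m}:\cdots:\varepsilon a_0:0:\delta b_0:\cdots:\delta b_{2n}]$ after appending one period to the $b$-part yields exactly two extra periods in the regular form. I would prove it either by induction on the period length, or by translating the long continued fraction into a product of the matrices $M_a$ of Definition~\ref{Def-Ma} and invoking Proposition~\ref{proposition-continuants-general}; in the latter framing, the $M^2$-conjugation that extends $P_1$ to $P_2$ multiplies the continuant representation by a block $M_{a_1,\ldots,a_n}$, and reading off its continuant entries via Proposition~\ref{proposition-continuants-general} explains why one cone-sail period produces two periods in the LLS of the angle.
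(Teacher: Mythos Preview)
Your geometric decomposition of the sails of $\angle P_0OP_1$ and $\angle P_0OP_2$ into initial, middle, and terminal portions is correct and is essentially how the paper proceeds in Lemma~\ref{M3-M2}: there one fixes a sail vertex $v$ in a fundamental domain for the $M^2$-action, decomposes the sails along the orbit points $v_0,v_1,v_2$, and reads off that the two LLS sequences share a prefix and a suffix, with the discrepancy equal to one period of the acting matrix. So that part of your plan is on the right track and matches the paper.

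The gap is in your ``hard part''. You assert that passing from $P_1=M^4P_0$ to $P_2=M^6P_0$ appends \emph{one} further period to the continued fraction of the slope, and that this single period then doubles to two periods of the angle's LLS under the reduction of Theorem~\ref{theorem-longcf}. Both claims are off. Going from $P_1$ to $P_2$ is multiplication by $M^2$, which appends \emph{two} LLS periods of $M$ (equivalently, one period of $M^2$) to the slope's continued fraction; and the reduction of the long continued fraction to regular odd form does not introduce any $1\!\to\!2$ doubling. In the worked example the eight extra entries come directly from two copies of $(4,1,2,3)$ appearing in the $b$-part, not from one copy being duplicated in the reduction.

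The paper's argument for the factor of two is accordingly much simpler than the combinatorial identity you propose: it applies Lemma~\ref{M3-M2} to the matrix $N=M^2$ (this is why one needs $M^4$ and $M^6$ rather than $M^2$ and $M^3$: squaring guarantees positive eigenvalues so Lemma~\ref{M3-M2} applies), obtaining that the difference is one LLS period of $N=M^2$; and then observes from Definition~\ref{Def-Ma} that $M_{a_1,\ldots,a_n}^2=M_{a_1,\ldots,a_n,a_1,\ldots,a_n}$, so one period of $M^2$ equals two periods of $M$. You should replace your proposed continued-fraction induction by this direct observation.
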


\begin{remark}
The obtained period of the LLS sequence might be not of the minimal lengths.
\end{remark}

We start the proof with the following lemma.

\begin{lemma}\label{M3-M2}
Let a $\GL(2,\z)$ matrix $M$ has distinct irrational positive eigenvalues.
Let also $P_0$ be any non-zero integer point.
Denote
$$
P_1=M^2(P_0), \quad \hbox{and} \quad P_2=M^3(P_0).
$$
Then there exists a difference
$$
LLS(\angle P_0OP_2) - LLS(\angle P_0OP_1),
$$
which is a period of the LLS sequence for $M$.
\end{lemma}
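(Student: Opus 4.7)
The plan is to exploit the geometric meaning of $LLS(\angle P_0 O P_j)$ as the LLS sequence of the sail of that angle, and to compare the sails of $\angle P_0 O P_1$ and $\angle P_0 O P_2$ by decomposing each into natural pieces that interact cleanly under the $M$-action.

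Since $M$ has distinct positive irrational eigenvalues, it preserves each of the four open cones bounded by the eigenlines through $O$ and acts on each associated cone's sail as a shift; by definition, $M^2$ shifts the sail by $n$ vertices where $n$ is the LLS period length of $M$, so $M$ itself shifts by $n/2$ vertices. First I would establish a canonical decomposition: for primitive integer points $X,Y$ in the same open cone with $Y$ far enough along the sail direction past $X$, the sail of $\angle XOY$ decomposes as a \emph{descent} broken line $D_X$ from $X$ down to a distinguished cone-sail vertex $V_X$, followed by the arc of the cone's sail from $V_X$ to a vertex $V_Y$, followed by an \emph{ascent} broken line $A_Y$ from $V_Y$ up to $Y$. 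Crucially $V_X$ and $D_X$ depend only on the ray $OX$ and the cone, and analogously $V_Y,A_Y$ depend only on the ray $OY$.

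Next, the $M$-equivariance of the cone, the lattice and the cone's sail yields $V_{MX}=MV_X$, $D_{MX}=M\cdot D_X$, and the analogous identities for ascents; since integer lengths and sines are invariant under $\GL(2,\z)$, this gives $LLS(D_{MX})=LLS(D_X)$ and $LLS(A_{MY})=LLS(A_Y)$. Applying this with $P_1=M^2P_0$ and $P_2=MP_1$, the two sequences $LLS(\angle P_0 O P_1)$ and $LLS(\angle P_0 O P_2)$ share $LLS(D_{P_0})$ together with the initial cone-sail piece up to and including the length of the edge entering $V_{P_1}$ as a common prefix, and they share the gluing sine at $V_{P_1}\leftrightarrow V_{P_2}$ together with $LLS(A_{P_1})=LLS(A_{P_2})$ as a common suffix. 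The remaining extra in $LLS(\angle P_0 O P_2)$ is the contribution of the cone-sail arc from $V_{P_1}$ to $V_{P_2}=MV_{P_1}$: its $n/2$ edges give $n/2$ lengths and $n/2-1$ interior sines, and together with the sine at $V_{P_1}$ (which is absorbed into the extra middle rather than the common suffix) this amounts to $1+n/2+(n/2-1)=n$ LLS elements. As this is a contiguous window of length $n$ in the periodic cone-sail LLS of period $n$, it is an LLS period of $M$, yielding the claimed difference.

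The step I expect to be the main obstacle is making this canonical decomposition precise: justifying that $V_X$ and $D_X$ really depend only on the ray $OX$ once $Y$ is sufficiently far (which is automatic for $Y=P_1,P_2$ under our hypothesis since $M^2$ shifts by one full period along the sail), and tracking the sines at the gluing vertices carefully so that the common prefix, common suffix and extra chunk line up exactly with the paper's ``difference of sequences'' definition rather than overlapping or being off by one at a boundary.
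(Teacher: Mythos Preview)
Your approach is correct and essentially the same as the paper's: both decompose the sail of $\angle P_0OP_j$ into a near-$P_0$ piece, a portion of the eigencone's sail, and a near-$P_j$ piece, and then use $M$-equivariance to match the outer pieces and isolate one LLS period in the middle. The paper is slightly more direct in that, rather than defining canonical descent/ascent vertices $V_X$, it simply picks any cone-sail vertex $v$ in the fundamental domain $\angle P_0OM(P_0)$ and uses $v,\ Mv,\ M^2v$ as anchors, which sidesteps exactly the well-definedness issue you flagged as your main obstacle.
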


\begin{proof}
Set $Q=M(P_0)$.
First of all note that $\angle P_0OQ$ is a fundamental domain of one of the angles $C$ whose edges are
eigenvectors of $M$
up to the action of the group of (integer) powers of $M$.
Hence it contains at last one vertex of the sail.
Denote this vertex by $v$.
Then the angle $\angle P_0OP_2$ contains vertices
$$
v_0=v, \quad v_1=M(v),  \quad \hbox{and} \quad \quad v_2=M^2(v).
$$
Thus by convexity reasons, the sail for the angle $\angle P_0OP_2$
contains the part of the sail of $C$ between $v_0$ and $v_2$.

Namely there will be four parts of the sail:
\begin{itemize}
\item $S_1$: a part of the sail contained in $P_0Ov_0$;
\item $S_2$: a part of the sail contained in $v_0Ov_1$;
\item $S_3$: a part of the sail contained in $v_1Ov_2$;
\item $S_4$: a part of the sail contained in $v_2OP_2$.
\end{itemize}
Here $S_2$ and $S_3$ are periods of the sail for the angle $\angle P_0OP_2$.

\vspace{2mm}

Now by the same reason we have $v_0$ and $v_1$ in the sail for  angle $\angle P_0OP_1$.
Here we have the following parts
\begin{itemize}
\item $S'_1$: a part of the sail contained in $P_0Ov_0$;
\item $S'_2$: a part of the sail contained in $v_0Ov_1$;
\item $S'_3$: a part of the sail contained in $v_1OP_1$.
\end{itemize}

\vspace{2mm}

Note that
$$
\left\{
\begin{array}{l}
S'_1=S_1;\\
S'_2=S_2;\\
S'_2\cong S_3;\\
S'_3\cong S_4.
\end{array}
\right.
$$
Therefore, the difference of the LLS sequences for the angle $\angle P_0OP_2$ and the angle $\angle P_0OP_1$
is precisely the period of the LLS sequence between the points $v_1$ and $v_2$.
This period correspond to $M$ as $M(v_1)=v_2$.
This concludes the proof.
\qed
\end{proof}

\begin{remark}
It is not enough to consider the difference of the LLS sequences
for the angles $\angle P_0OP_1$ and $\angle P_0OQ$ (where $Q=M(P_0)$), as it is not possible to determine the last integer sine of the period then.
Let us illustrate this with the following example.

Consider a matrix
$$
M=
\left(
\begin{array}{cc}
1&2\\
1&3\\
\end{array}
\right).
$$
Let us aslo take the point $P=(4,-1)$.
Then
$$
Q=M(P_0)=(2,1), \quad P_1=M^2(P_0)=(4,5), \quad \hbox{and} \quad P_2=M^3(P_0)=(14,19).
$$
The LLS sequences for the angles $\angle P_0OQ$, $\angle P_0OP_1$
and $\angle P_0OP_2$ are respectively
$$
\begin{array}{l}
(1,4,1);\\
(1,3,1,3,1);\\
(1,3,1,2,1,3,1).\\
\end{array}
$$
We have
$$
(1,3,1,2,1,3,1)-(1,3,1,3,1)=(2,1)
$$
which is the correct period for the LLS sequence of $M$,
while the difference
$$
(1,3,1,3,1)-(1,4,1)
$$
is not even defined.
\end{remark}

{\noindent {\it Proof of Proposition~\ref{LLS-GL}.}
First of all let us study the LLS sequences of reduced operators.
Let
$$
M=M_{a_1,\ldots,a_n}
$$
be a reduced operator for the sequence of positive integers $(a_1,\ldots, a_n)$.
Then from Definition~\ref{Def-Ma} we have
$$
M^2=M^2_{a_1,\ldots,a_n}=M_{a_1,\ldots,a_n,a_1,\ldots,a_n}.
$$
Hence the period of the LLS sequence corresponding to $M^2$ is twice the period of $M$.
}

\vspace{2mm}

For an arbitrary $M$ we know that 
$$
M^2\cong M_{a_1,\ldots,a_n,a_1,\ldots,a_n}=M^2_{a_1,\ldots,a_n}.
$$
Hence $M$ itself is $\PGL(2,\z)$-congruent to $M_{a_1,\ldots,a_n}$  
Therefore,  
the period of LLS sequence corresponding to $M^2$ will be twice the period of
the LLS sequence for $M$.

\vspace{2mm}

By Lemma~\ref{M3-M2} the difference
$$
LLS(\angle P_0OP_3) - LLS(\angle P_0OP_2)
$$
exists and it is a period for $M^2$.
Finally by the above the resulting sequence is a period of the LLS sequence for $M$ repeated twice.
\qed

%
%
%
%

\end{document}